\DeclareMathOperator{\codim} {codim}
\newtheorem{prop}{Proposition}[section]
\newtheorem{proposition}[prop]{Proposition}
\newtheorem{theorem}[prop]{Theorem}
\newtheorem{lem}[prop]{Lemma}
\newtheorem{lemma}[prop]{Lemma}
\theoremstyle{definition}
\newtheorem{definition}[prop]{Definition}
\theoremstyle{remark}
\newtheorem{remark}[prop]{Remark}
\newtheorem{notation}[prop]{Notation}
\newtheorem{example}[prop]{Example}
\newtheorem{qst}[prop]{Question}
\newtheorem{question}[prop]{Question}
\newcommand{\F}{\mathbb{F}}
\newcommand{\G}{\mathbb{G}}
\newcommand{\Sym}{\mathrm{Sym}}
\newcommand{\pp}{\mathbb{P}}
\newcommand{\oo}{\mathcal{O}}
\newcommand{\rk}{\mathrm{rk}}
\title{Iitaka fibrations for vector bundles}
\author{Ernesto C. Mistretta, Stefano Urbinati}
\date{}
\begin{document}

\maketitle

\epigraph{\itshape Think you're escaping and run into yourself. Longest way round is the shortest way home.}{---James Joyce, \textit{Uulysses - Nausicaa}}

\begin{abstract}
A  vector bundle on a smooth projective variety, if it is generically generated by global sections,
yields a rational map to a Grassmannian,
called Kodaira map.
We investigate the asymptotic behaviour of the Kodaira maps for the symmetric powers of a vector bundle,
and we show that these maps stabilize to a map dominating all of them, 
as it happens for a line bundle via the Iitka fibration.
Through this Iitaka-type construction, applied to the cotangent bundle, 
we give a new characterization of Abelian varieties.

\end{abstract}

\section{Introduction}

The aim of this work is to  to extend the notions of stable base loci, Kodaira maps and Iitaka fibrations to vector bundles,
in the same framework as \cite{4bros}.

This work is based on a key concept in birational geometry, i.e. that, 
given an algebraic variety $X$, the global sections of a line bundle on $X$ naturally induce
a (rational) map in some projective space.
It is  well-known, for example, that sections of some powers of an ample line bundle give
an embedding in a projective space. 
For more general line bundles this does not hold anymore.
There is, however, a classical and well developed theory of Iitaka fibrations 
describing all the different possible outcomes, see \cite{PAG}.

In the construction of the Iitaka fibration there are three  main steps. 
The first one is to decide if and where these maps are well defined (as regular morphisms), 
by studying the stable base locus of the given line bundle. 
Second, the fibration needs to behave nicely in an asympthotic way, 
hence the maps, that we will call \emph{Kodaira maps}, 
induced by different powers of the same line bundle have to be related. 
Third and last, the images shall stabilize, be controlled by a morphism and give information about the original variety.
All of this is known in the case of line bundles.

The main difference respect to the case of line bundles is that
global sections of vector bundles naturally induce maps into Grassmannians rather then projective spaces. 
Whereas the main technical issue is that the rank of the bundle grows as we take powers of it,
we are still able to prove that it is possible to give an Iitaka type construction in this more general case.
The results are similar to the ones achieved for line bundles, up to a finite map. 
In particular we do not always obtain a  fibration, a surjective morphism with connected fibres.

The paper is organized as follows. 

In section \ref{baseloci} we recall the main notions and results on asymptotic base loci for vector bundles introduced in \cite{4bros},
fixing some of the definitions. Furthermore, we correct the original definition of augmented and restricted base loci 
given in \cite{4bros}, as there were some inaccuracies.

In section \ref{semiample} we focus on the definition os semiampleness for vector bundles pointing out that the two characterizations,
often  considered equivalent in the folklore of vector bundles and positivity constructions, are actually not. 
In the same section, we provide an Iitaka construction for \emph{strongly semiample} vector bundles.

In the last section,  we give an Iitaka type statement for \emph{Asymptotically Generically Generated} vector bundles. 
We conclude the section with a characterization of Abelian varieties via  Iitaka-type invariants and base loci of the cotangent bundle.

\subsection{Acknowledgments} We would like to thank Francesco Esposito and Francesco Polizzi for extremely helpful and very pleasant conversations,
and we thank particularily Andreas H\"{o}ring for his kind advice and remarks.

\section{Base loci and projectivizations}
\label{baseloci}

In  \cite{4bros} the authors
established relations between positivity properties of vector bundles  on a projective variety $X$
and some corresponding properties of asymptotic base loci in $X$, in the same flavor of what happens for line bundles.

Because of some extra difficulties arising in the case of vector bundles, the 
various definitions of positivity (\emph{i.e.} bigness, semiampleness, etc.) 
appearing in the literature do differ according to different authors.

In this section we review the main properties of asymptotic base loci, their relations to positivity properties,
and some of the different definitions appearing in the literature, focusing on what will be needed in the Iitaka construction: semiampleness
and asymptotic generic generation.
We also take the chance to correct some definitions stated in \cite{4bros}  which were not very precise.

\begin{notation}
  Let $X$ be a normal projective variety over the complex numbers, and $E$ a vector
  bundle over $X$. 
  For a
  point $x\in X$, $E_x=E\otimes_{\oo_X}\oo_{X,x}$ denotes the stalk of $E$ at the
  point $x$, and $E(x)=E\otimes_{\oo_X} \kappa(x)$ denotes the fibre of $E$ at the point $x$, where $\kappa(x)$ is the residue
  field at $x$. 
  Clearly, $E(x)$ is a vector space of dimension $r=\rk E$.
\end{notation}

\begin{definition}

  We define the \emph{base locus} of $E$ (over $X$) as the subset
\[
\mathrm{Bs} (E) := \{ x \in X ~|~ H^0(X, E) \to E(x) ~\textrm{is not surjective} \}\ ,
\]
and the \emph{stable base locus} of $E$ (over $X$) as
\[
\mathbb{B} (E) := \bigcap_{m>0} \mathrm{Bs}(\Sym^m E)\ .
\]
\end{definition}

\begin{remark} The assertions below follow immediately from the definition:
  \begin{enumerate}
  \item $E$ is globally generated, \emph{i.e.} generated by its global sections, if and only if 
  $\mathrm{Bs} (E) = \emptyset$.
  \item As $\mathrm{Bs} (E) = {\rm Bs} (\mathrm{Im} (\bigwedge^{\rk E} H^0(X, E) \to
    H^0(X, \det E) ))$, these loci are closed subsets, and carry a natural scheme
    structure.
      \end{enumerate}
\end{remark}
%
%
%
%

\begin{definition}

  Let $r = p/q \in \mathbb{Q}^{>0}$ be a positive rational number, and $A$ a line
  bundle on $X$.  We will use the following notation:
  \begin{align*}
    \mathbb{B}(E + rA) &:=\bigcap_{m >0} \mathrm{Bs} (\Sym^{mq} E \otimes A^{mp}), \quad\text{and}\\
    \mathbb{B}(E - rA) &:= \bigcap_{m >0} \mathrm{Bs} (\Sym^{mq} E \otimes A^{-mp}) \ .
  \end{align*}

Let $A$ be an ample line bundle on $X$,
we define the \emph{augmented base locus} of $E$ as
\[
\mathbb{B}_+^A (E):= \bigcap_{r \in \mathbb{Q}^{>0}} \mathbb{B}(E - r A ) \ ,
\]
and the \emph{restricted base locus} of $E$ as
\[
\mathbb{B}_-^A (E):= \bigcup_{r \in \mathbb{Q}^{>0}} \mathbb{B}(E + r A )\ .
\]

\end{definition}

\begin{remark}

 Note that if $r=p'/q'$ is another representation of $r$ as a fraction, then
  $q'p=p'q$, hence
  $$
  \Sym^{q'q} E \otimes A^{q'p}\simeq
  \Sym^{q'q} E \otimes A^{p'q},
  $$
therefore, 
$\mathrm{Bs}(\Sym^{m q^{\prime} q} E \otimes A^{m q^{\prime} p})
  =\mathrm{Bs}(\Sym^{mq q^{\prime}} E \otimes A^{mq p^{\prime}})$ 
  and hence $\mathbb{B}(E + rA)$ is
  well-defined. A similar argument shows that $\mathbb{B}(E - rA)$ is well-defined as well.

\end{remark}

\begin{remark}

In \cite{4bros} the loci  $\mathbb{B}(E + rA)$  and
$\mathbb{B}(E - rA) $ were defined by 
$\mathbb{B}(E + rA) = \mathbb{B} (\Sym^{q} E \otimes A^{p})$
and $\mathbb{B}(E - rA) = \mathbb{B} (\Sym^{q} E \otimes A^{-p})$.
However, as 
$\Sym^m (\Sym^{q} E \otimes A^{p}) \neq \Sym^{mq} E \otimes A^{mp}$,
the loci defined here better suite our purposes.

\end{remark}

\begin{remark}

The definitions above yield the following properties:

\begin{enumerate}

\item The loci \( \mathbb{B}_+^A (E)\) and \( \mathbb{B}_-^A (E)\) do not depend on
  the choice of the ample line bundle $A$, so we can write
  \( \mathbb{B}_+ (E)\) and \( \mathbb{B}_- (E)\) for the augmented and restricted base locus of $E$, respectively.

\item For any $r_1 > r_2 > 0$ we have \( \mathbb{B} (E + r_1 A) \subseteq \mathbb{B}
  (E + r_2 A) \) and \( \mathbb{B} (E - r_2 A) \subseteq \mathbb{B} (E - r_1 A) \).

\item In particular, for any $\epsilon >0$ we have \( \mathbb{B} (E + \epsilon A) \subseteq
  \mathbb{B} (E) \subseteq \mathbb{B} (E - \epsilon A) \).

\item Therefore we have that
  $$
    \mathbb{B}_+ (E):= \bigcap_{q \in \mathbb{N}}
    \mathbb{B}(E - (1/q) A ) \qquad \text{and} \qquad
    \mathbb{B}_- (E):= \bigcup_{q \in \mathbb{N}} \mathbb{B}(E + (1/q) A ).
  $$

\item It follows that $\mathbb B_+(E)$ is closed but, even for line bundles, the
  locus $\mathbb{B}_- (E)$ is not closed in general: Lesieutre \cite{lesieutre}
  proved that this locus can be a proper dense subset of $X$, or a proper dense
  subset of a divisor of $X$.

\end{enumerate}

\end{remark}

\begin{remark}

In the case of a line bundle over the variety $X$ these loci correspond to the well-known stable base  locus $\mathbb{B} (L)$,
augmented base locus $\mathbb{B}_+ (L)$, and restricted base locus $\mathbb{B}_- (L)$.
Positivity properties of line bundles are related to asymptotic base loci as  summarized in the following table:

  \begin{center}

    \begin{tabular}{|c|c|c|c|}

      \hline

      & $\mathbb{B}_- (L)$ & $ \mathbb{B} (L) $
      & $\mathbb{B}_+ (L)$ \\
      \hline
      $=\emptyset $&  nef &  semiample & ample \\
      \hline
      $\neq X $ & pseudo-effective &  effective &  big \\

      \hline

    \end{tabular}

  \end{center}

\medskip

\end{remark}

In the case of higher rank, some of the positivity properties can be generalized in various non-equivalent ways:
one of the most natural way follows.

Let $E$ be a vector bundle on a normal  projective variety $X$, $\pi \colon
\mathbb{P}(E) \to X$ the projective bundle of rank one quotients of $E$, and
${\oo}_{\mathbb{P}(E)}(1)$ the universal quotient line bundle of $\pi^* E$ on $\mathbb{P}(E)$.
The most common way to relate positivity properties of line bundles 
to positivity properties of vector bundles is applying the definition for line bundles to
${\oo}_{\mathbb{P}(E)}(1)$:

\begin{definition}
We say that $E$ is a \emph{nef} (respectively \emph{semiample}, \emph{ample}, \emph{big}) vector bundle on X
if ${\oo}_{\mathbb{P}(E)}(1)$ is a nef (respectively semiample, ample, big) line bundle on $\mathbb{P}(E)$.

\end{definition}

\begin{remark}
We immediately have
\[
\pi (\mathbb{B} ({\oo}_{\mathbb{P}(E)}(1))) \subseteq \mathbb{B} (E) \ .
\]
More precisely, we have $\pi (\mathrm{Bs} ({\oo}_{\mathbb{P}(E)}(1))) = \mathrm{Bs} (E)$
(cf. \cite{4bros})
and we will show in the following section that the inclusion $\pi(\mathbb{B} ({\oo}_{\mathbb{P}(E)}(1)))
\subseteq \mathbb{B} (E) $ of  stable loci is strict in general.  

However in \cite{4bros} 
some useful connections are established  relying on properties
of  augmented and restricted base loci, which exhibit  a more predictable
behavior with respect to the map $\pi$.

\end{remark}

In  \cite{4bros}  we established the following:

\begin{proposition}
\label{bminplus}

Let $E$ be a vector bundle on a normal projective variety $X$,
$\pi \colon \mathbb{P}(E) \to X$ the projective bundle of one dimensional quotients of $E$,
and ${\oo}_{\mathbb{P}(E)}(1)$ the universal quotient of $\pi^* E$ on $\mathbb{P}(E)$.
Then
\begin{enumerate}
\item
$\pi(\mathbb{B}_- ({\oo}_{\mathbb{P}(E)}(1))) =  \mathbb{B}_- (E)$ ;

\item
 $ \pi(\mathbb{B}_+ ({\oo}_{\mathbb{P}(E)}(1))) =  \mathbb{B}_+ (E)$ .

\end{enumerate}

\end{proposition}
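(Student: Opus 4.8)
The plan is to translate everything into a statement about the fibres of $\pi$. Since $\mathbb{B}_{\pm}$ do not depend on the ample bundle used to define them, I would first choose $c\gg 0$ so that $E\otimes A^{c}$ is ample, making $\oo_{\mathbb{P}(E)}(1)\otimes\pi^{*}A^{c}=\oo_{\mathbb{P}(E\otimes A^{c})}(1)$ an ample line bundle on $\mathbb{P}(E)$. Computing $\mathbb{B}_{\pm}(\oo_{\mathbb{P}(E)}(1))$ with this ample bundle and rescaling the $\mathbb{Q}$-coefficients, one replaces the ample perturbation by the nef pullback $\pi^{*}A$, obtaining $\mathbb{B}_{-}(\oo_{\mathbb{P}(E)}(1))=\bigcup_{s>0}\mathbb{B}(\oo_{\mathbb{P}(E)}(1)+s\,\pi^{*}A)$ and $\mathbb{B}_{+}(\oo_{\mathbb{P}(E)}(1))=\bigcap_{s>0}\mathbb{B}(\oo_{\mathbb{P}(E)}(1)-s\,\pi^{*}A)$. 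Using $\pi_{*}(\oo_{\mathbb{P}(E)}(m)\otimes\pi^{*}A^{k})=\Sym^{m}E\otimes A^{k}$ and the resulting identification of global sections, the proposition reduces, since $\pi(\mathbb{B}_{\pm}(\oo_{\mathbb{P}(E)}(1)))=\{x:\pi^{-1}(x)\cap\mathbb{B}_{\pm}(\oo_{\mathbb{P}(E)}(1))\neq\emptyset\}$, to the fibrewise equivalence $x\in\mathbb{B}_{\pm}(E)\Longleftrightarrow \pi^{-1}(x)\cap\mathbb{B}_{\pm}(\oo_{\mathbb{P}(E)}(1))\neq\emptyset$.

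The key dictionary is obtained by restricting to a fibre: $\oo_{\mathbb{P}(E)}(m)$ restricts to $\oo_{\mathbb{P}(E(x))}(m)$ on $\pi^{-1}(x)\cong\mathbb{P}(E(x))$, with $H^{0}=\Sym^{m}E(x)$, and the evaluation $H^{0}(X,\Sym^{m}E\otimes A^{k})\to \Sym^{m}E(x)$ has some image $W$; the base locus of $\oo_{\mathbb{P}(E)}(m)\otimes\pi^{*}A^{k}$ along $\pi^{-1}(x)$ is precisely the base locus of the linear subsystem $W\subseteq|\oo_{\mathbb{P}(E(x))}(m)|$. This immediately yields the easy inclusions: if $x\notin\mathbb{B}(E\pm sA)$, then at some level $W$ is all of $\Sym^{m}E(x)$, hence in particular base-point-free, so the whole fibre avoids $\mathbb{B}(\oo_{\mathbb{P}(E)}(1)\pm s\,\pi^{*}A)$; taking unions (for $\mathbb{B}_{-}$) or intersections (for $\mathbb{B}_{+}$) over $s$ gives $\pi(\mathbb{B}_{\pm}(\oo_{\mathbb{P}(E)}(1)))\subseteq\mathbb{B}_{\pm}(E)$.

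For the reverse inclusions I would argue contrapositively, starting from $\pi^{-1}(x)\cap\mathbb{B}_{\pm}(\oo_{\mathbb{P}(E)}(1))=\emptyset$. Because $\pi^{-1}(x)$ is a projective space and the relevant base loci are closed and nested, compactness lets me pass to a single power: I get base-point-freeness of $\oo_{\mathbb{P}(E)}(m_{0})\otimes\pi^{*}A^{\mp m_{0}s}$ along the entire fibre, for one twist $s_{0}$ in the augmented case and for every $s>0$ in the restricted case. Equivalently, the image subsystem $W\subseteq|\oo_{\mathbb{P}(E(x))}(m_{0})|$ is base-point-free. What remains is to upgrade this to surjectivity $H^{0}(X,\Sym^{m}E\otimes A^{\mp ms})\twoheadrightarrow \Sym^{m}E(x)$ for suitable $m$ and $s$, which is exactly the statement $x\notin\mathbb{B}(E\mp sA)$, hence $x\notin\mathbb{B}_{\pm}(E)$.

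The main obstacle is precisely this last upgrade. A base-point-free subsystem $W$ of $|\oo_{\mathbb{P}(E(x))}(m_{0})|$ need not be the complete system $\Sym^{m_{0}}E(x)$ (for instance $\langle x^{2},y^{2}\rangle$ on $\mathbb{P}^{1}$ is base-point-free but misses $xy$): base-point-freeness along the fibre is strictly weaker than surjectivity onto the symmetric power, and this Veronese-type gap is exactly why the naive analogue for the ordinary stable base locus fails, so that $\pi(\mathbb{B}(\oo_{\mathbb{P}(E)}(1)))\subsetneq\mathbb{B}(E)$ in general. The plan to close the gap is to exploit the freedom in the perturbation parameter together with the multiplicativity of the section algebra: products of global sections realize the image of $\Sym^{k}W$ inside the image at level $km_{0}$, while re-introducing a little of the ample twist $\pi^{*}A$ enlarges $W$; the point to establish is that persistence of base-point-freeness for a cofinal family of twists forces the image subspaces to saturate to the full $\Sym^{m}E(x)$. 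I expect this uniform saturation step — converting fibrewise base-point-freeness across all small twists into honest surjectivity — to be the crux, to be handled by a Noetherian/semicontinuity argument tracking how the image subspaces vary with the twisting parameter.
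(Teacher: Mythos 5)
Two preliminary remarks. First, the paper you are reading does not actually contain a proof of this proposition: it is recalled from \cite{4bros} (``In \cite{4bros} we established the following''), so your attempt can only be compared with that reference, not with an argument in the present text. Second, the parts of your plan that you carry out are correct: the rescaling trick replacing the ample perturbation by the nef pullback $\pi^{*}A$ is legitimate, since $H=\oo_{\mathbb{P}(E)}(1)\otimes\pi^{*}A^{c}$ is ample for $c\gg 0$ and $\oo_{\mathbb{P}(E)}(1)+s\,\pi^{*}A$ is a positive rational multiple of $\oo_{\mathbb{P}(E)}(1)+\tfrac{s}{c-s}H$ for $0<s<c$ (while for $s\geq c$ the class is ample and contributes nothing); the fibrewise dictionary between linear subsystems $W_{m,k}=\mathrm{Im}\bigl(H^{0}(X,\Sym^{m}E\otimes A^{k})\to\Sym^{m}E(x)\otimes A^{k}(x)\bigr)$ and base loci along $\pi^{-1}(x)$ is the right one; and the two ``easy'' inclusions $\pi(\mathbb{B}_{\pm}(\oo_{\mathbb{P}(E)}(1)))\subseteq\mathbb{B}_{\pm}(E)$ are proved correctly.

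The genuine gap is exactly where you place it, but the tool you propose for closing it cannot close it. A ``Noetherian/semicontinuity argument tracking how the image subspaces vary'' can only use the formal structure of the spaces $W_{m,k}$: they multiply ($W_{m,k}\cdot W_{m',k'}\subseteq W_{m+m',k+k'}$) and they grow when the twist grows. But base-point-freeness along the fibre is itself stable under all of these operations: products of base-point-free subsystems are base-point-free, and multiplying by a section of $A$ not vanishing at $x$ leaves the base locus on $\pi^{-1}(x)$ unchanged. Hence a family of proper, base-point-free subspaces can persist through every product and every twist with no formal principle ever forcing saturation --- your own example propagates: all powers and twists of $\langle x^{2},y^{2}\rangle$ remain base-point-free and proper. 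What actually produces the missing sections is cohomological, and this is where the surplus $s-s''>0$ in the twisting parameter must enter quantitatively. Indeed, $x\notin\mathrm{Bs}(\Sym^{m}E\otimes A^{ms})$ is the surjectivity of the restriction map $H^{0}(\mathbb{P}(E),\oo_{\mathbb{P}(E)}(m)\otimes\pi^{*}A^{ms})\to H^{0}(\pi^{-1}(x),\oo_{\mathbb{P}(E)}(m)|_{\pi^{-1}(x)})$, which is governed by an $H^{1}$-vanishing for $\pi^{*}\mathcal{I}_{x}$ twisted by that line bundle; no semicontinuity statement supplies this. The concrete ingredient that does the job, and that your sketch is missing, is a regularity argument on the fibre: if $W\subseteq H^{0}(\mathbb{P}^{r-1},\oo(d))$ is base-point-free, the Koszul complex of the evaluation $W\otimes\oo\to\oo(d)$ gives $W\cdot H^{0}(\oo(e))=H^{0}(\oo(d+e))$ as soon as $e\geq(r-1)(d-1)$ (Castelnuovo--Mumford); combining this with the fullness of $W_{e,j}$ for $j\gg 0$ (Serre vanishing) and running a descent on the infimum of the ratios $k/m$ for which $W_{m,k}$ is full drives that threshold to $0$ (resp.\ below $0$ in the augmented case), which is precisely the statement $x\notin\mathbb{B}_{-}(E)$ (resp.\ $x\notin\mathbb{B}_{+}(E)$). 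Without a vanishing-theorem input of this kind, the ``uniform saturation step'' you flag is not a technical verification to be deferred: it is the entire content of the hard inclusion.
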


We recall some definitions appearing in the literature which can be related to the asymptotic base loci defined above:

\begin{definition}
\label{variedef}

Let $E$ be  a vector bundle on the variety $X$ .

\begin{enumerate}

\item We say that $E$ is  \emph{pseudo-effective} if  $ \mathbb{B}_- (E) \neq X$
(cf. \cite{bdpp}, where for a line bundle $L$ the locus $L_{nonnef}$ coincides with  $\mathbb{B}_- (L)$)

\item
 We say that $E$ is \emph{weakly semipositive over the (nonempty) open subset $U$} if 
 $\mathbb{B}_- (E) \subseteq X \setminus U$ (cf. \cite{vie}).

 \item We say that 
 $E$ is \emph{weakly semipositive} if it is weakly semipositive over some nonempty open subset in $X$ (cf. \cite{vie}).

\item
We say that the vector bundle $E$ is \emph{strongly semiample} if 
$\Sym^m E$ is globally generated for $m >> 0$.
(cf. \cite{demailly}).

\item We say that $E$ is \emph{asymptotically generically 
 generated} (AGG) 
if there exists a 
nonempty open subset $U \subseteq X$ and a positive iteger $m>0$ such that
$\Sym^m E$ is generated by global sections over the points in $U$.

\item
We say that $E$ is \emph{strongly big} if there exists an ample line bundle $A$,
and a positive integer $c > 0$, such that $\Sym^c E \otimes A^{-1}$ is weakly positive
(cf. \cite{jabbusch}).

\end{enumerate}

\end{definition}

It was proven in  \cite{4bros} that the following table holds for the above definitions on vector bundles:
  \begin{center}
    \begin{tabular}{|c|c|c|c|c|}
      \hline
      & $\mathbb{B}_- (E)$ & ${\overline{\mathbb{B}_- (E)}  }$  & 
$ \mathbb{B} (E) $  & $\mathbb{B}_+ (E)$ 
\vphantom{$\overline{B}^{\text{\normalsize B}}$}\\
      \hline
      $=\emptyset $& nef & nef & strongly semiample  & ample \\
      \hline
      $\neq X $ & pseudo-effective & weakly positive & AGG  &  strongly big \\
      \hline
    \end{tabular}
  \end{center}
In the following sections we further investigate the 
semiampleness and the  stable base loci of vector bundles,
and we provide some constructions which generalize Iitaka fibrations
to vector bundles, when they are strongly semiample or AGG.

\section{Semiample vector bundles}
\label{semiample}
In this section we describe and compare the 2 different notions of semi-amplitude for vector bundles existing in the literature,
and provide an Iitaka construction for strongly semiample vector bundles.

\subsection{Strongly semiample vector bundles}

In the literature two different definition for semiampleness can be found:

- either a vector bundle is called semiample when ${\oo}_{\mathbb{P}(E)}(1)$ is semiample
(one such reference can be found in \cite{proceedingskatata}, \emph{Open Problems});

- or it is called semiample when $\Sym^m E$ is globally generated for some $m > 0$
(cf. \cite{demailly}).

In order to distinguish the two definitions we call
 the latter \emph{strong semiampleness}, according to definition \ref{variedef} above.

\begin{remark}

Clearly, a vector bundle $E$ is strongly semiample if and only if $ \mathbb{B} (E) = \emptyset$; and $E$ is semiample 
if and only if $\mathbb{B} ({\oo}_{\mathbb{P}(E)}(1)) = \emptyset$.

\end{remark}

It is often believed that the two definitions coincide (cf. \cite{demailly}),
but it is indeed easy to prove that strong semiampleness implies semiampleness
(\emph{i.e.} 
$\pi (\mathbb{B} ({\oo}_{\mathbb{P}(E)}(1))) \subseteq \mathbb{B} (E) $).
However, it is not hard to construct examples where 
semiampleness holds while strong semiampleness does not, 
showing therefore that the inclusion 
$\pi (\mathbb{B} ({\oo}_{\mathbb{P}(E)}(1))) \subseteq \mathbb{B} (E) $ 
can be strict:

\begin{example}
Let $C$ be a smooth projective curve of genus $g \geqslant 1$. And let $E = \oo_C \oplus L $ with $L$ being a nontrivial torsion line bundle
satisfying $L^{\otimes 2} = \oo_C$. Clearly $L$ is not globally generated as it has no nontrivial global sections.
As in $\Sym^c E$ there is a direct factor isomorphic to $L$, then for any $c>0$ the vector bundle 
$\Sym^c E$ cannot being generated by global sections in any point.

However it can be proven that  $\oo_{\pp(E)} (1)$ is semiample: it is easily proven by hand that  $\oo_{\pp(E)}(2)$ is globally generated on $\pp(E)$,
but we remark that we can apply Catanese-Dettweiler criterion for semiampleness (cf. \cite{catdett}), 
as $E$ is a unitary flat (poly-stable of degree $0$ over a curve) vector bundle on the curve $C$,
and clearly the image of its monodromy representation is a finite group of order $2$.

\end{example}

The example above also shows that strong semiampleness is not stable with respect to finite morphisms,
contrarily to what happens to semiampleness.

\subsection{Semiample ``fibration''}

Let $E$ be a rank $r$ strongly semiample vector bundle on a normal complex projective variety $X$. 
Set $\mathbf{M} (E) := \{ m \in \mathbb{N} ~|~  S^m E  \textrm{ is globally generated} \}$.
Denote
\[
\sigma_m (r) := \dim S^m E = \binom{m+r-1}{m}
\]
then for any $m \in \mathbf{M} (E)$ evaluation on global sections of $S^m E$ yields a morphism:
\[
\begin{array}{rcl}
\varphi_m \colon  X & \to  & \G (H^0 (X,S^m E) , \sigma_m(r))\\
x & \mapsto  & [H^0(X, S^m E) \twoheadrightarrow S^m E (x)]
\end{array}
\]
where $\G (H^0 (S^m E) , \sigma_m(r))$ is the grassmannian of $\sigma_m (r)$-dimensional quotients.
We call these morphisms \emph{Kodaira maps} (cf. \cite{BCL}).

Ampleness and some other positivity properties cannot be detected through the geometry of Kodaira maps
(cf. \cite{PAG} 6.1.6),
however it is interesting to observe the asymptotical behavior of these Kodaira maps $\varphi_m$: in the case of 
(semiample) line bundles, these maps factor through a tower of finite maps, so that their images all have the same dimensions,
and for large $m$ one gets a given fibration, which is the Stein factorization of any of the maps $\varphi_m$.

Something similar happens for higher rank vector bundles,
but we do not get a fibration in general. The purpose of this section is to investigate this asymptotic construction.

\begin{lem}
\label{lemmafinite}

Let $E$ be a  strongly semiample vector bundle on a normal projective variety $X$, let 
$\varphi_m$ be the Kodaira maps described above, for any  integer $m \in \mathbf{M} (E) $, 
and call $Y_m$ the normalization of the image of the map $\varphi_m $.
Then for any  positive integer ${k}$, the integer $km$ lies in $\mathbf{M} (E) $ and the map
$\varphi_m$ factors through $\varphi_{km}$ and a finite morphism $Y_{km} \to Y_m$. 

\end{lem}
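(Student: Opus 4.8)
The plan is to prove the three assertions in order: $km\in\mathbf{M}(E)$, then that $\varphi_m$ is constant on the fibres of $\varphi_{km}$, and finally that the induced map on normalized images is a finite morphism. For the membership $km\in\mathbf{M}(E)$ I would use the multiplication map $S^k(S^m E)\twoheadrightarrow S^{km}E$ of the $\oo_X$-algebra $S^\bullet E=\bigoplus_{d\ge 0}S^d E$, which is surjective because this algebra is generated in degree one. Since symmetric powers and quotients of globally generated bundles are again globally generated, $S^m E$ globally generated forces $S^k(S^m E)$, and hence its quotient $S^{km}E$, to be globally generated; thus $km\in\mathbf{M}(E)$.

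For the factorization at the level of points, recall that $\varphi_m(x)$ is the point of the Grassmannian cut out by the subspace $\ker(\mathrm{ev}^m_x)\subseteq H^0(X,S^m E)$, where $\mathrm{ev}^m_x$ denotes evaluation at $x$; thus $\varphi_{km}(x)=\varphi_{km}(y)$ exactly when $\ker(\mathrm{ev}^{km}_x)=\ker(\mathrm{ev}^{km}_y)$. The crucial remark is that for a global section $s$ of $S^m E$ its $k$-th power $s^k$, taken in the algebra $S^\bullet E$, is a global section of $S^{km}E$ whose value at $x$ is $(s(x))^k$ computed in the fibre algebra $S^\bullet(E(x))=\bigoplus_d S^d(E(x))$. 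As the latter is a polynomial ring over $\mathbb{C}$, hence an integral domain, $s(x)=0$ holds if and only if $s^k(x)=0$. Granting $\varphi_{km}(x)=\varphi_{km}(y)$, any $s$ with $s(x)=0$ then satisfies $s^k\in\ker(\mathrm{ev}^{km}_x)=\ker(\mathrm{ev}^{km}_y)$, so $s^k(y)=0$ and therefore $s(y)=0$; by symmetry $\ker(\mathrm{ev}^m_x)=\ker(\mathrm{ev}^m_y)$, that is $\varphi_m(x)=\varphi_m(y)$. Hence $\varphi_m$ is constant on the fibres of $\varphi_{km}$, and sending $\varphi_{km}(x)\mapsto\varphi_m(x)$ gives a well-defined map between the images $Z_{km}$ and $Z_m$ of the two Kodaira maps.

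To upgrade this set map to a finite morphism $Y_{km}\to Y_m$, I would consider the graph $\Gamma\subseteq Z_{km}\times Z_m$ of the two maps, which is closed because $X$ is projective; by the previous step its first projection $\Gamma\to Z_{km}$ is bijective and proper, hence finite and (in characteristic zero) birational, so $\Gamma$ and $Z_{km}$ have the same normalization $Y_{km}$. Composing $Y_{km}\to\Gamma$ with the second projection and invoking the universal property of normalization --- together with the normality of $X$, which lets me lift $\varphi_m$ and $\varphi_{km}$ to $Y_m$ and $Y_{km}$ and identify the two lifts --- produces the desired factorization of $\varphi_m$ through $\varphi_{km}$ and a morphism $f\colon Y_{km}\to Y_m$. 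The main obstacle is the finiteness of $f$, for a priori the sections of $S^{km}E$ that are not products of sections of $S^m E$ might separate a positive-dimensional family of points identified by $\varphi_m$. I expect to remove this obstacle by descending from $\mathbb{P}(E)$: since $E$ is strongly semiample, $\oo_{\mathbb{P}(E)}(1)$ is semiample and thus admits a semiample fibration $g\colon\mathbb{P}(E)\to V$ with $\oo_{\mathbb{P}(E)}(1)=g^*A$ for some ample $A$ on $V$. For each $m\in\mathbf{M}(E)$ the bundle $A^m$ is ample and globally generated, so it defines a finite morphism, and the line-bundle Kodaira map of $\oo_{\mathbb{P}(E)}(m)$ factors as this finite morphism composed with the single fibration $g$; hence all these maps share the fibres of $g$ up to finite sets. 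Through the identification of $\varphi_m(x)$ with the linear span of the $m$-th Veronese image of the fibre $\pi^{-1}(x)$, the fibres of $\varphi_m$ and of $\varphi_{km}$ then differ only in finite sets, which together with properness shows that $f$ is quasi-finite and therefore finite.
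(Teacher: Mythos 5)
Your first two steps are correct and essentially reproduce the paper's own mechanism. The surjection $S^k(S^m E)\twoheadrightarrow S^{km}E$, together with the fact that symmetric powers and quotients of globally generated bundles are globally generated, gives $km\in\mathbf{M}(E)$; and your power-of-sections argument --- $s(y)=0$ if and only if $s^k(y)=0$, because $S^{\bullet}(E(y))$ is an integral domain --- is exactly the trick the paper uses (phrased there via a flag variety) to show that the fibres of $\varphi_{km}$ refine those of $\varphi_m$. Your graph construction, producing a morphism $f\colon Y_{km}\to Y_m$ through which $\varphi_m$ factors, is a legitimate and arguably cleaner substitute for the paper's flag-variety diagram.

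The gap is in the finiteness of $f$, which is precisely where the paper has to do real work. There are two problems with your descent from $\mathbb{P}(E)$. First, the claim that the semiample fibration $g\colon\mathbb{P}(E)\to V$ satisfies $\oo_{\mathbb{P}(E)}(1)=g^*A$ with $A$ ample is false in general: only a sufficiently divisible power of $\oo_{\mathbb{P}(E)}(1)$ is pulled back from an ample bundle. Already for $E=L$ a nontrivial $2$-torsion line bundle on a curve (so $\mathbb{P}(E)=X$ and $\oo_{\mathbb{P}(E)}(1)=L$, which is strongly semiample), the fibration $g$ is the map to a point, and $L$ is not trivial; this defect is repairable by working with the exponent. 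Second, and not repairable by rewording: the passage from $\mathbb{P}(E)$ back down to $X$ is a non sequitur. The factorizations $\phi_m=h_m\circ g$ with $h_m$ finite control the fibres of the \emph{line-bundle} Kodaira maps on $\mathbb{P}(E)$, but the equality $\varphi_m(x)=\varphi_m(y)$ on $X$ only says that the linear spans of $\phi_m(\pi^{-1}(x))$ and $\phi_m(\pi^{-1}(y))$ coincide; it identifies no point of $\pi^{-1}(x)$ with any point of $\pi^{-1}(y)$, since two disjoint Veronese varieties can have the same span. Consequently a curve $\widetilde C\subset X$ contracted by $\varphi_m$ yields, a priori, no curve in $\mathbb{P}(E)$ contracted by $g$, and your sentence ``the fibres of $\varphi_m$ and of $\varphi_{km}$ differ only in finite sets'' is exactly the assertion that needs proof. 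The paper closes this gap on $X$ itself: if $\varphi_m$ contracts $\widetilde C$, then global generation plus constancy of the kernel $K$ of evaluation makes $(H^0(X,S^mE)/K)\otimes\oo_{\widetilde C}\to S^mE|_{\widetilde C}$ a surjection of vector bundles of equal rank, hence an isomorphism, so $S^mE|_{\widetilde C}$ is trivial; then $S^k(S^mE)|_{\widetilde C}$ is trivial, hence so is its direct summand (we are in characteristic zero) $S^{km}E|_{\widetilde C}$; therefore global sections of $S^{km}E$ restrict to constant sections along $\widetilde C$, and $\varphi_{km}$ contracts $\widetilde C$ as well. This shows $f$ is quasi-finite, and quasi-finite plus proper gives finite. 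Substituting this curve-contraction argument for your descent step would complete your proof.
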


In particular this shows that all the images $Y_m$ have the same dimension.

\begin{proof}

Obviously if  $S^m E$ is globally generated  so is $S^{km} E$. 

We want to compare the maps 
$\varphi_m \colon  X  \to  Y_m \to \G (H^0 (X,S^m E) , \sigma_m(r)) $
and $\varphi_{km} \colon  X  \to  Y_{km} \to \G (H^0 (X,S^{km} E) , \sigma_{km}(r)) $,
by factoring the first one through a finite map $Y_{km} \to Y_m$.

Let us call $\F_{k,m}$ the flag variety of quotients
$S^k H^0 (X,S^{m} E) \twoheadrightarrow Q_1 \twoheadrightarrow Q_2$, with 
$\dim Q_1 = \sigma_k (\sigma_m (r))$ and $\dim Q_2
 = \sigma_{km} (r)$, and $\psi_{k,m} \colon X \to \F_{k,m}$ the map
 $x \mapsto [S^k H^0 (X,S^{m} E) \twoheadrightarrow S^k (S^m E(x)) \twoheadrightarrow S^{km} E (x)]$.
Let us call $\widehat{Y}_{k,m} $  the normalization of the  image of $X$ in $\F_{k,m}$,  $\pi_1$ and $\pi_2$ the natural projections
from the flag variety to the 2 Grassmannians.
 
 Finally, after calling $W$ the image of the natural linear map 
 \[
 S^k H^0 (X, S^m E) \to H^0(X, S^{km}E) ~,
 \]
 we can construct the following diagram:

\begin{center}
\begin{tikzpicture}
\node (A) at (-2, 0){$X$};
\node (B) at (1,0) {$\widehat{Y}_{k,m}$};
\node (C) at (4,0){$\F_{k,m}$};
\node (D) at (3,1){$\widetilde{Y_2}$};
\node (E) at (3,2){$\widetilde{Y_2}$};
\node (F) at (3,3){$Y_{km}$};
\node (G) at (3,-1){$\widetilde{Y_1}$};
\node (H) at (3,-2){$Y_m$};
\node (I) at (6.5,1){$ \G (S^k H^0 (X,S^{m} E) , \sigma_{km}(r))$};
\node (J) at (6.5,2){$ \G (W, \sigma_{km}(r)) $};
\node (K) at (6.5,3){$ \G (H^0 (X,S^{km} E) , \sigma_{km}(r)) $};
\node (L) at (6.5,-1){$ \G (S^k H^0 (X,S^{m} E) , \sigma_{k}(\sigma_m(r)))$};
\node (M) at (6.5,-2){$ \G (H^0 (X,S^{m} E) , \sigma_{m}(r))$};
\path[->>] (A) edge node[above]{$\psi_{k,m}$}(B);
\path[->] (B) edge node[right]{}(C);
\path [->>] (B) edge node[above,rotate=30]{$\sim$} (D);
\path[->>] (B) edge node[below]{$\mu$}(G);
\path[->] (H) edge node[below,rotate=90]{$\sim$}(G);
\path[->>] (A) edge node[above]{$\varphi_{km}$~~~~}(F);
\path[->>] (A) edge node[below]{$\varphi_m$}(H);
\path[->>] (F) edge node[right]{$\nu$}(E);
\path[->] (E) edge node[below,rotate=90] {$\sim$} (D);
\path[->] (F) edge node[above]{}(K);
\path[->] (E) edge node[above]{}(J);
\path[->] (D) edge node[above]{}(I);
\path[->>] (C) edge node[below]{$\pi_2$}(I);
\path[->>] (C) edge node[above]{$\pi_1$}(L);
\path[->] (G) edge node[above]{}(L);
\path[->] (H) edge node[above]{}(M);
\path[dotted,->>] (K) edge node[above]{}(J);
\path[right hook->] (J) edge node[above]{}(I);
\path[right hook->] (M) edge node[above]{}(L);
\end{tikzpicture}
\end{center}
where the maps between grassmannians are constructed as follows:
\begin{itemize}
\item the first map above is a (rational) projection induced on the grassmannians (of quotients)
by the inclusion $W \subseteq H^0(X, S^{km}E)$;
\item the second map is induced by the quotient $S^k H^0(X, S^m E) \twoheadrightarrow W$;
\item the last map below is induced by considering the $k$-th 
symmetric product of a quotient $H^0(X, S^m E) \twoheadrightarrow Q$.
\end{itemize}

As evaluating global sections on a point $x \in X$ commutes with the various symmetric products considered,
it is easy to see that the maps above are well defined and make all diagrams commute.
As an example the map $\pi_2 \circ \psi_{k,m}$ is obtained on $x$ by evaluating
$S^k H^0(X, S^m E) \twoheadrightarrow S^{km} E (x)$, as this maps factors through
$S^k H^0(X, S^m E) \twoheadrightarrow W \hookrightarrow H^0(X, S^{km} E)$ then we see that
$\pi_2 \circ \psi_{k,m} = \nu \circ \varphi_{km}$.

In order to complete the proof of the lemma we have to show that:
\begin{enumerate}
\item the map $\widehat{Y}_{k,m} \twoheadrightarrow \widetilde{Y_2}$ is an isomorphism;
\item the map $\nu \colon Y_{km} \twoheadrightarrow \widetilde{Y_2}$ is finite;
\item the map $\mu \colon \widehat{Y}_{k,m} \twoheadrightarrow \widetilde{Y_1}$ is finite.
\end{enumerate}

The first point follows observing that if the evaluation map
\[
S^k H^0 (X,S^{m} E) \twoheadrightarrow S^{km}E(x)
\]
 coincide on two points $x_1, x_2 \in X$ (\emph{i.e.} their kernels are equal) then also 
the images of $x_1$ and $x_2$ in $\F_{k,m}$ coincide:
suppose by contradiction that the evaluation maps
$S^k H^0 (X,S^{m} E) \twoheadrightarrow S^{km}E(x)$
 coincide on $x_1$ and $x_2$, but the maps
$S^k H^0 (X,S^{m} E) \twoheadrightarrow S^k (S^m E(x)) $
do not coincide on those points.
Then there exists a section $s \in H^0(X, S^m E)$ such that
$s(x_1)=0 \in S^m(E(x_1))$ and $s(x_2) \neq 0 \in S^m(E(x_2))$,
so we have $s(x_1)^k=0 \in S^{km}(E(x_1))$ and $s(x_2)^k \neq 0 \in S^{km}(E(x_2))$ as well,
in contradiction with the assumption.

Then the projection $\pi_2$ from the flag variety {$\F_{k,m}$} to the Grassmannian 
$ \G (S^k H^0 (X,S^{m} E) , \sigma_{km}(r))$
induces a bijection on the images from $X$, and therefore an isomorphism 
on their normalizations.

The second point follows
 observing that $\nu$ is a regular map on $Y_{km}$ which is the restriction 
of an affine map (the projection) to a proper variety, hence is finite.

The last point can be proven  by contradiction: suppose that  an irreducible curve $C \subseteq \widehat{Y}_{k,m}$ 
is contracted by $\mu$,
then there exists an irreducible curve $\widetilde{C}$ in $X$ dominating $C$ which is contracted by 
$\varphi_m$.
Therefore $S^m E _{|\widetilde{C}} \cong \oo_{\widetilde{C}}^{\oplus\sigma_m(r)}$ is trivial
and so are $S^k (S^m E)_{|\widetilde{C}}$ and $S^{km} E_{|\widetilde{C}}$,
so that evaluation on global sections is constant on points of $\widetilde{C}$,
\emph{i.e.} the map $\psi_{k,m}$ contracts $\widetilde{C}$, which contradicts our hypothesis.

\end{proof}

The above Lemma implies  the following

\begin{theorem}[Iitaka for strongly semiample vector bundles]
\label{itakasemiample}

Let $X$ be a normal projective variety, $E$ a vector bundle as above, 
$\varphi_m$ the Kodaira maps and $Y_m$ the normalizations of their images. 
Then there exists a diagram 
\begin{center}
\begin{tikzpicture}
\node (A) at (-1, 0){$X$};
\node (B) at (2,1) {$Y_{\infty}$};
\node (C) at (2,-1){$Y_{\G}$};
\path[->] (A) edge node[above]{$\varphi_{_{{\rm det}E}}$~~~}(B);
\path[->] (B) edge node[right]{$\pi$}(C);
\path[->] (A) edge node[below]{$\varphi_{\G}$}(C);
\end{tikzpicture}
\end{center}
where $\varphi_{_{{\rm det}E}}$ is the semiample fibration (Iitaka) induced by the determinant line bundle of $E$ and
$\pi$ is a finite map,
such that  for any $m \in \mathbf{M}(E)$
and  for $k \gg 0$ we have $Y_{km} =  Y_{\G}$ and $\varphi_{km} = \varphi_{\G}$.
In particular, any Kodaira map $\varphi_m$, for $m \in \mathbf{M}(E)$,  factors through
$\varphi_{\G}$ and a finite map.

Furthermore, for all $m \in \mathbf{M}(E)$
there exist vector bundles $Q_m$ on $Y_{\G}$ such that 
$\varphi^* Q_m = S^m E$.

\end{theorem}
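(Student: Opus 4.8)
The plan is to first produce the stable target $Y_{\G}$ purely from the finite maps of Lemma \ref{lemmafinite}, and only afterwards to identify it as a finite cover of the Iitaka model of $\det E$. Since $X$ dominates every $Y_m$, each function field $\CC(Y_m)$ is a subfield of $\CC(X)$, and the finite morphisms $Y_{km}\to Y_m$ of Lemma \ref{lemmafinite} realize $\{\CC(Y_m)\}_{m\in\mathbf{M}(E)}$ as a system of subfields directed by divisibility, all of the same transcendence degree $d=\dim Y_m$ over $\CC$. Their union $L_\infty$ is a subfield of the finitely generated extension $\CC(X)/\CC$, hence is itself finitely generated; being a directed union equal to a finitely generated field, it equals $\CC(Y_{m_0})$ for some $m_0\in\mathbf{M}(E)$. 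I set $Y_{\G}:=Y_{m_0}$ and $\varphi_{\G}:=\varphi_{m_0}$. For every multiple $m$ of $m_0$ the inclusions $\CC(Y_{m_0})\subseteq\CC(Y_m)\subseteq L_\infty$ are equalities, so the finite birational map $Y_m\to Y_{m_0}$ between normal projective varieties is an isomorphism; combined with Lemma \ref{lemmafinite} this shows that each $\varphi_m$ factors through $\varphi_{\G}$ followed by a finite map $Y_{\G}\to Y_m$, and that $Y_{km}=Y_{\G}$, $\varphi_{km}=\varphi_{\G}$ once $km$ is a sufficiently divisible multiple of $m_0$.

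Next I bring in the determinant. Because $S^mE$ is globally generated for $m\in\mathbf{M}(E)$, so is $\det S^mE=(\det E)^{\otimes e_m}$ for a suitable $e_m>0$; hence $\det E$ is semiample and carries its semiample (Iitaka) fibration $\varphi_{\det E}\colon X\to Y_\infty$ with connected fibres, where $(\det E)^{\otimes N}=\varphi_{\det E}^*A_\infty$ for some ample $A_\infty$ on $Y_\infty$ and some $N>0$ (cf. \cite{PAG}). Composing $\varphi_m$ with the Plücker embedding $\G\big(H^0(X,S^mE),\sigma_m(r)\big)\hookrightarrow\PP\big(\bigwedge^{\sigma_m(r)}H^0(X,S^mE)\big)$ realizes it as the morphism attached to the subsystem $V_m:=\mathrm{Im}\big(\bigwedge^{\sigma_m(r)}H^0(X,S^mE)\to H^0(X,(\det E)^{\otimes e_m})\big)$, which is base-point-free precisely because global generation of $S^mE$ is preserved by $\bigwedge^{\sigma_m(r)}$. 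As $(\det E)^{\otimes e_m}$ restricts to a numerically trivial line bundle on each fibre $F$ of $\varphi_{\det E}$, the sections in $V_m$ are pairwise proportional along $F$, so this morphism, and hence $\varphi_{\G}$, contracts $F$. Rigidity for the proper connected-fibred morphism $\varphi_{\det E}$ onto the normal variety $Y_\infty$ then yields $\pi\colon Y_\infty\to Y_{\G}$ with $\varphi_{\G}=\pi\circ\varphi_{\det E}$.

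For the bundles $Q_m$, recall the universal rank-$\sigma_m(r)$ quotient $\mathcal{Q}$ on $\G\big(H^0(X,S^mE),\sigma_m(r)\big)$, for which $\varphi_m^*\mathcal{Q}=S^mE$ by the very definition of the Kodaira map. For each $m\in\mathbf{M}(E)$, pulling $\mathcal{Q}$ back along $Y_{\G}=Y_{km}\to Y_m\hookrightarrow\G$ (with $k\gg0$) defines $Q_m$ on $Y_{\G}$ with $\varphi_{\G}^*Q_m=S^mE$. Moreover $\det\mathcal{Q}=\oo_{\G}(1)$, so $\det Q_m$ is the restriction of the ample Plücker bundle and is ample on $Y_{\G}$. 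Pulling back, $\varphi_{\det E}^*(\pi^*\det Q_m)^{\otimes N}=(\det E)^{\otimes Ne_m}=\varphi_{\det E}^*A_\infty^{\otimes e_m}$; since $\varphi_{\det E}$ has connected fibres this forces $(\pi^*\det Q_m)^{\otimes N}=A_\infty^{\otimes e_m}$, ample, so $\pi^*\det Q_m$ is ample and $\pi$ is finite. In particular $\dim Y_{\G}=\dim Y_\infty=\kappa(\det E)=d$.

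The delicate point — where I expect to concentrate the effort — is the exact range of $k$ in the stabilization. Lemma \ref{lemmafinite} compares $Y_{km}$ with $Y_m$ only along divisibility, so the function-field argument literally delivers $Y_{km}=Y_{\G}$ when $km$ is a sufficiently divisible multiple of $m_0$. Upgrading this to \emph{all} $k\gg0$ requires controlling the degrees $[L_\infty:\CC(Y_{km})]$, which are bounded by the finite degree $[L_\infty:\CC(Y_m)]$ and non-increasing along divisibility; one must rule out their oscillation for large, non-divisible $k$, exploiting that $\mathbf{M}(E)$ contains every sufficiently large integer. This numerical bookkeeping, rather than any of the geometric steps above, is the real obstacle.
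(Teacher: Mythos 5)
Your proposal is correct in all of its geometric steps and, in outline, it follows the paper's own proof: the paper likewise first stabilizes the tower of finite maps $Y_{km}\to Y_m$ from Lemma \ref{lemmafinite} (your function-field argument is a rigorous rendering of its rather informal ``inverse limit'' step), and likewise identifies $\varphi_{\G}$, after the Pl\"ucker embedding, with the map given by the linear series $W=\mathrm{Im}\bigl(\bigwedge^{\sigma_n(r)}H^0(X,S^nE)\to H^0(X,(\det E)^{\otimes N})\bigr)$. Where you diverge is in how $\pi$ is produced and shown to be finite: the paper observes that $\varphi_{\G}$ is the composition of the Kodaira map of $(\det E)^{\otimes N}$ with the linear projection $\PP(H^0(X,(\det E)^{\otimes N}))\dashrightarrow\PP(W)$, and that this projection, restricted to the proper variety $Y_\infty$, is affine and proper, hence finite --- the same one-line argument used for $\nu$ in Lemma \ref{lemmafinite}. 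Your route (numerical triviality of $(\det E)^{\otimes e_m}$ on fibres, rigidity to obtain $\pi$, then injectivity of $\varphi_{\det E}^*$ on Picard groups to see that $\pi^*\det Q_m$ is ample) is longer but valid, and has the side benefit of exhibiting $\det Q_m$ as an ample line bundle on $Y_{\G}$. One minor imprecision: $Y_m\to\G$ is a finite map, not an embedding, since $Y_m$ is a normalization of the image, so ``restriction'' of the Pl\"ucker bundle should read ``pullback under a finite map''; ampleness is preserved either way.

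The gap you flag is real relative to the literal statement --- your argument gives $Y_{km}=Y_{\G}$ only when $m_0$ divides $km$ --- but the cure is not degree bookkeeping: the missing idea is an \emph{additive} analogue of Lemma \ref{lemmafinite}, coming from the multiplication maps $S^{a}E\otimes S^{b}E\twoheadrightarrow S^{a+b}E$. These show that $\mathbf{M}(E)$ is closed under addition, and the kernel-comparison argument of the Lemma adapts with $st$ in place of $s^k$: if $s\in H^0(X,S^{a}E)$ has $s(x_1)=0$ and $s(x_2)\neq 0$, choose $t\in H^0(X,S^{b}E)$ with $t(x_2)\neq 0$ (possible by global generation); then the section $st\in H^0(X,S^{a+b}E)$ separates the evaluation kernels at level $a+b$, because the symmetric algebra $S^{\bullet}E(x_2)$ is an integral domain. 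Exactly as in the Lemma, this yields $\CC(Y_a)\subseteq\CC(Y_{a+b})$ for all $a,b\in\mathbf{M}(E)$. Now fix $m\in\mathbf{M}(E)$ and any $k>m_0$: writing $km=m_0m+(k-m_0)m$ gives $\CC(Y_{km})\supseteq\CC(Y_{m_0m})=L_\infty$, while $\CC(Y_{km})\subseteq\CC(Y_{kmm_0})=L_\infty$ always holds; hence the finite map $Y_{kmm_0}=Y_{\G}\to Y_{km}$ is birational onto a normal variety and therefore an isomorphism by Zariski's main theorem, exactly as in your divisible case. With this one addition your proof is complete --- and in fact more careful than the paper's, which asserts the stabilization for all $k\gg 0$ without addressing the divisibility issue at all.
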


\begin{proof}
Applying Lemma \ref{lemmafinite} above we see that all $Y_m$ are dominated by $Y_{km}$ and that the corresponding Kodaira maps factor through finite maps,
hence there must be an inverse limit of all such maps , corresponding to $Y_{km}$ for $k \gg0$, call such limit 
$Y_{\G}$.

Now we have $Y_{\G} \subseteq \G(H^0(X, S^n E), \sigma_n (r)) \subseteq \mathbb{P} (\bigwedge^{\sigma_n(r)}H^0(X, S^n E))$ for some $n \in \mathbf{M} (E)$,
where the latter is Pl{\" u}cker embedding.
So the map $\varphi_{\G}$ is determined by the  linear subseries $W \subseteq H^0(X , (\det E)^{\otimes N})$,
where $(\det E)^{\otimes N} = \det (S^n E)$, and $W = \mathrm{Im}(\bigwedge^{\sigma_n(r)}H^0(X, S^n E) \to H^0(X , (\det E)^{\otimes N}))$ .

Then $\varphi_{\G}$ factors through 
$\varphi_{_{{\rm det}E}} \colon X \to  \mathbb{P} (H^0(X, (\det E)^{\otimes N}))$ and a projection to $\mathbb{P} (W)$,
the first map being exactly the Iitaka fibration for the line bundle $\det E$, and the projection being a finite map for the same reason as the map 
$\nu$ in the proof of the  above lemma.

The last property stated in the theorem follows pulling back to $Y_{\G}$, for each $m \in \mathbf{M}(E)$, 
the canonical quotients of the grassmannians 
through the maps
\[
Y_{\G} \to Y_m \to \G (H^0(X, S^m E), \sigma_m(r)) ~.
\]
\end{proof}

\begin{remark}
\label{notfib1}

The finite maps $Y_{km} \to Y_m$ induce finite extensions of the fields 
of rational functions
$k(Y_m) \subseteq k(Y_{km})$. These extensions are all included in the field
of rational functions of $X$, in particular they are included in 
the algebraic closure of $k(Y_m)$ in $k(X)$. For $m\gg 0$ they stabilize
to $k(Y_{\mathbb{G}})$. As $\varphi_{\det E}$ is a fibration, 
then $k(Y_{\infty})$ is algebraically closed in $k(X)$, 
and it is in fact the algebraic closure of $k(Y_m)$ in $k(X)$.
However the field $k(Y_{\mathbb{G}})$ needs not to be algebraically closed 
in $k(X)$. 
This corresponds to the fact that in general the map $\pi$ 
appearing in the theorem is not an isomorphism but just a finite map,
as it is shown in the example below.
\end{remark}

\begin{remark}
\label{notfib2}

One can wonder whether the map induced by global sections of the symmetric power of a vector bundle 
is actually nothing more but the Iitaka fibration of the determinant bundle, seen after Pl{\" u}ker embedding of the Grassmannian.
Actually this is not always the case, so that the finite map $\pi$ appearing in the theorem is not always an isomorphism,
and its degree is therefore an invariant of the vector bundle $E$. An example where the map $\pi$ is not an isomorphism is given below.
\end{remark}

\begin{example}
\label{counterexdouble}

Fix a vector space $V$ of dimension $3$, and let $\pi \colon X \to  \mathbb{P}(V) = \mathbb{P}^2$ be a double cover, ramified over a smooth conic of $\mathbb{P}^2$, 
\emph{i.e.} $X \cong \mathbb{P}^1 \times \mathbb{P}^1$.
Consider the Euler exact sequence on $\mathbb{P}^2$:
\[
0 \to \oo_{\mathbb{P}^2}(-1) \to V^* \otimes \oo_{\mathbb{P}^2} \to \mathcal{Q} \to 0
\]
call $E: = {\pi}^* \mathcal{Q} $, then we can prove that for all powers $S^m E$ the Kodaira maps are finite of degree $2$.
In fact, ${\pi}_* \oo_X =  \oo_{\mathbb{P}^2} \oplus \oo_{\mathbb{P}^2} (-1)$,
and by projection formula we have:
\[
H^0(X, S^m E) = H^0(\mathbb{P}^2, \pi_* S^m E)= H^0(\mathbb{P}^2, S^m \mathcal{Q}) \oplus  H^0(\mathbb{P}^2, S^m \mathcal{Q}(-1)) ~,
\]
and since  for all $m>0$ we have that  $H^0(\mathbb{P}^2, S^m \mathcal{Q}(-1)) =0$, then 
all global sections of $S^m E$ over $X$ are pull back of sections of $S^m \mathcal{Q}$ over ${\mathbb{P}^2}$,
so the Kodaira maps factor through the degree $2$ map $\pi$.

\end{example}

\section{Iitaka-type properties for vector bundles}
\label{iitaka}
We now study the Iitaka construction in case of non strongly semiample vector bundles.
In order to have (rational) maps to Grassmannian varieties we need some generation property,
which is a rather strong positivity property, that corresponds to effectivity for line bundles.

\begin{definition}
We recall the definition of \emph{asymptotic generic generation} given above in section \ref{baseloci}.

\begin{enumerate}

\item Let $X$ be a normal projective complex variety. Let $E$ be a vector bundle on $X$, with ${\rm rk}(E)=r$,
then $E$ is said to be \emph{asymptotically generically generated} (AGG), if global sections of some symmetric power $S^m E$
generate $S^mE$ over an open dense subset of $X$.

\item Let $E$ be an AGG vector bundle over a variety $X$. Then 
denote $\mathbf{N} (E) := \{ m \in \mathbb{N} ~|~  Bs( S^m E) \neq X  \}$

\end{enumerate}

\end{definition}

\begin{remark}

A vector bundle $E$ is asymptotically generically generated iff 
for some $m>0$ the evaluation map $H^0(X, S^m E) \otimes \oo_X  \to S^m E$ is surjective over an open subset of $X$,
iff 
$\mathbb{B} (E) \neq X$.

\end{remark}

For asymptotically generically generated vector bundles,
it makes sense to consider the Kodaira maps,
which in this case are rational maps
\[
\varphi_m \colon X \dashrightarrow \G(H^0(X, S^m E), \sigma_m(r))
\]
and study their asymptotic behavior.
For $m\gg 0$ in $\mathbf{N} (E)$  these maps are regular out of the stable base locus
$\mathbb{B} (E)$, call $Y_m$ the images.
We will construct a diagram similar to the one appearing in 
Lemma \ref{lemmafinite}. Note that we can consider the images of Kodaira maps without normalizing,
as in this case we are only interested in the birational behavior.

\begin{center}
\begin{tikzpicture}
\node (A) at (-2, 0){$X \setminus \mathbb{B}(E)$};
\node (B) at (1,0) {$\widehat{Y}_{k,m}$};
\node (C) at (4,0){$\F_{k,m}$};
\node (D) at (3,1){$\widetilde{Y_2}$};
\node (E) at (3,2){$\widetilde{Y_2}$};
\node (F) at (3,3){$Y_{km}$};
\node (G) at (3,-1){$\widetilde{Y_1}$};
\node (H) at (3,-2){$Y_m$};
\node (I) at (6.5,1){$ \G (S^k H^0 (X,S^{m} E) , \sigma_{km}(r))$};
\node (J) at (6.5,2){$ \G (W, \sigma_{km}(r)) $};
\node (K) at (6.5,3){$ \G (H^0 (X,S^{km} E) , \sigma_{km}(r)) $};
\node (L) at (6.5,-1){$ \G (S^k H^0 (X,S^{m} E) , \sigma_{k}(\sigma_m(r)))$};
\node (M) at (6.5,-2){$ \G (H^0 (X,S^{m} E) , \sigma_{m}(r))$};
\path[->>] (A) edge node[above]{$\psi_{k,m}$}(B);
\path[right hook->] (B) edge node[right]{}(C);
\path [->>] (B) edge node[above,rotate=30]{} (D);
\path[->>] (B) edge node[below]{$\mu$}(G);
\path[->] (H) edge node[below,rotate=90]{$\sim$}(G);
\path[->>] (A) edge node[above]{$\varphi_{km}$~~~~}(F);
\path[->>] (A) edge node[below]{$\varphi_m$}(H);
\path[->>] (F) edge node[right]{$\nu$}(E);
\path[->] (E) edge node[below,rotate=90] {$\sim$} (D);
\path[right hook->] (F) edge node[above]{}(K);
\path[right hook->] (E) edge node[above]{}(J);
\path[right hook->] (D) edge node[above]{}(I);
\path[->>] (C) edge node[below]{$\pi_2$}(I);
\path[->>] (C) edge node[above]{$\pi_1$}(L);
\path[right hook->] (G) edge node[above]{}(L);
\path[right hook->] (H) edge node[above]{}(M);
\path[dotted,->>] (K) edge node[above]{}(J);
\path[right hook->] (J) edge node[above]{}(I);
\path[right hook->] (M) edge node[above]{}(L);
\end{tikzpicture}
\end{center}

\begin{remark} In the latter diagram the maps $\mu$ and $\nu$ are not necessarily finite maps.
\end{remark}

This construction yields the following

\begin{theorem}[Iitaka for AGG vector bundles]
Let $X$ be a projective variety and $E$ be an asymptotically generically generated bundle over $X$.
Call $\varphi_m \colon X \dashrightarrow Y_m$ the evaluation rational maps defined as above.
Then there exist projective varieties $X_{\mathbb{G}}$ and
$Y_{\mathbb{G}}$ together with regular surjective morphisms $u_{\mathbb{G}} \colon X_{\mathbb{G}} \to X$ and 
${\varphi}_{\mathbb{G}} \colon X_{\mathbb{G}} \to Y_{\mathbb{G}}$ such that 
for every $m \in \mathbf{N} (E)$ and for $k \gg 0$ the regular map ${\varphi}_{\mathbb{G}} \colon X_{\mathbb{G}} \to Y_{\mathbb{G}}$ is a birational model for 
the rational map $\varphi_{km}$, \emph{i.e.} we have the following commutative diagram
\begin{center}
\begin{tikzpicture}
\node (A) at (0,1){$X$};
\node (B) at (0,0) {$Y_{km}$};
\node (C) at (2,1){$X_{\G}$};
\node (D) at (2,0){$Y_{\G}$};
\path[->,dashed] (A) edge node[left]{$\varphi_{km}$}(B);
\path[->,dashed] (D) edge node[below]{$\nu_{km}$}(B);
\path[->] (C) edge node[right]{$\varphi_{\G}$}(D);
\path[->] (C) edge node[above]{$u_{\G}$}(A);
\end{tikzpicture}
\end{center}
where the horizontal maps are birational.
\end{theorem}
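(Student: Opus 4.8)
The plan is to reduce everything to the behaviour of the function fields $k(Y_m) \subseteq k(X)$ and to stabilise them, exactly as in the strongly semiample case, but now only up to birational equivalence since the maps $\mu$ and $\nu$ are no longer finite. Write $K_m := k(Y_m)$, viewed as a subfield of $k(X)$ through the dominant rational map $\varphi_m \colon X \dashrightarrow Y_m$; note that $\mathbf{N}(E)$ is stable under multiplication, since a surjection $S^k(S^m E) \twoheadrightarrow S^{km}E$ shows that generic generation of $S^m E$ forces generic generation of $S^{km} E$. First I would extract from the diagram above the birational factorisation of $\varphi_m$ through $\varphi_{km}$. The separation argument of point (1) in the proof of Lemma \ref{lemmafinite} is purely pointwise, hence still valid on the dense open locus where all the evaluation maps are defined and surjective; so $\widehat{Y}_{k,m} \dashrightarrow \widetilde{Y_2}$ is birational even when it fails to be an isomorphism. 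Composing with the dominant maps $\mu$ and $\nu$ one obtains a dominant rational map $\theta_{k,m} \colon Y_{km} \dashrightarrow Y_m$ with $\varphi_m = \theta_{k,m} \circ \varphi_{km}$, which yields $K_m \subseteq K_{km}$ for every $k$ with $km \in \mathbf{N}(E)$.

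Next I would stabilise the fields. For $m_1, m_2 \in \mathbf{N}(E)$ the product $m_1 m_2$ lies in $\mathbf{N}(E)$ and dominates both, so the family $\{K_m\}_{m \in \mathbf{N}(E)}$ is directed under inclusion and its union $K_{\mathbb{G}} := \bigcup_m K_m$ is an intermediate field of $k(X)/k$. Since $k(X)$ is finitely generated over $k$, every intermediate field is finitely generated over $k$; writing $K_{\mathbb{G}} = k(g_1,\dots,g_N)$ and choosing $m^{*}$ a common multiple of indices $m_i$ with $g_i \in K_{m_i}$, I get $K_{\mathbb{G}} \subseteq K_{m^{*}} \subseteq K_{\mathbb{G}}$, hence $K_{m^{*}} = K_{\mathbb{G}}$. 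Therefore $K_{km} = K_{\mathbb{G}}$ as soon as $m^{*} \mid km$, which holds for every fixed $m \in \mathbf{N}(E)$ and all sufficiently divisible $k$.

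Finally I would realise this geometrically. Let $Y_{\mathbb{G}}$ be a projective model of $K_{\mathbb{G}}$, for instance the closure of $Y_{m^{*}}$; then the dominant rational map $\varphi_{m^{*}} \colon X \dashrightarrow Y_{\mathbb{G}}$ is resolved by taking $X_{\mathbb{G}}$ to be the normalisation of the closure of its graph in $X \times Y_{\mathbb{G}}$, with $u_{\mathbb{G}}$ and $\varphi_{\mathbb{G}}$ the two projections; the first is a birational surjective morphism and the second a regular surjective morphism. For any $m$ and any sufficiently divisible $k$ one has $K_{km} = K_{\mathbb{G}} = k(Y_{\mathbb{G}})$, so $Y_{km}$ and $Y_{\mathbb{G}}$ are birational via a map $\nu_{km} \colon Y_{\mathbb{G}} \dashrightarrow Y_{km}$; since $u_{\mathbb{G}}^{*}$ identifies $k(X)$ with $k(X_{\mathbb{G}})$ while all the maps in sight restrict to the inclusion $K_{\mathbb{G}} \hookrightarrow k(X)$ on function fields, the relation $\varphi_{km} \circ u_{\mathbb{G}} = \nu_{km} \circ \varphi_{\mathbb{G}}$ holds, with both horizontal maps birational, which is the asserted diagram.

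I expect the main obstacle to be the first step: guaranteeing that $\varphi_m$ genuinely factors through $\varphi_{km}$ birationally, i.e.\ that $\widehat{Y}_{k,m} \dashrightarrow \widetilde{Y_2}$ is birational even though $\mu$ and $\nu$ may now contract positive-dimensional loci. This is precisely what replaces the finiteness used in Lemma \ref{lemmafinite} and what makes the inclusions $K_m \subseteq K_{km}$ available. Once these inclusions are in hand, the stabilisation rests on the standard fact that intermediate fields of a finitely generated extension are finitely generated, and the passage to $X_{\mathbb{G}}$ is a routine resolution of indeterminacy.
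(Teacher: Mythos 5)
Your proposal is correct and follows essentially the same route as the paper: both reduce the statement to the chain of function fields $k(X) \supseteq k(Y_{km}) \supseteq k(Y_m)$ extracted from the flag-variety diagram, stabilize these fields inside $k(X)$, and then produce $X_{\mathbb{G}}$ and $Y_{\mathbb{G}}$ by compactifying a stabilized image and resolving indeterminacies (your graph-closure construction is exactly the paper's ``resolving the indeterminacies''). You in fact supply details the paper leaves implicit --- the birationality of $\widehat{Y}_{k,m} \dashrightarrow \widetilde{Y_2}$ via the pointwise separation argument, the finite generation of intermediate fields of $k(X)/k$ as the reason the chain stabilizes, and the honest caveat that the divisibility structure of $\mathbf{N}(E)$ yields stabilization only for sufficiently divisible $k$, a point the paper's own proof glosses over with ``$k$ big enough.''
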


\begin{proof}

Applying the construction above we have for each $m$ 
\[
k(X) \supseteq k(Y_{km}) \supseteq k(Y_m)
\]
so that all the $Y_m$'s are dominated by a common limit which is birational to $Y_{km}$ for $k$ big enough.
Fixing big enough $m$ and $k$, we can choose a compactification of $Y_{km}$ 
to be $Y_{\G}$ and resolving the indeterminacies we have a model 
${\varphi}_{\mathbb{G}} \colon X_{\mathbb{G}} \to Y_{\mathbb{G}}$
which is birational to all the other ones.
\end{proof}

\begin{remark}
We already noticed that,
contrary to the line bundle case,
the map ${\varphi}_{\mathbb{G}} \colon X_{\mathbb{G}} \to Y_{\mathbb{G}}$ is not a fibration in general
 (cf. Remarks \ref{notfib1} and \ref{notfib2}),
however one can consider the Stein factorization 
$X_{\mathbb{G}} \to Y_{\infty} \to  Y_{\mathbb{G}}$
for this map.
In the case of a strongly semiample vector bundle $E$ over $X$
we have that $X_{\mathbb{G}} = X$ and observed that the map $X_{\mathbb{G}} \to Y_{\infty} $
is exactly the Iitaka fibration for the line bundle 
$\det E$ over $X$, therefore in the strongly semiample case
the dimension of $Y_{\G}$ is the Iitaka dimension of the determinant of $E$.

 This needs not be the case for non semiample vector bundles,
so it makes sense to ask the following questions.

\end{remark}

In the following assume $E$ to be  AGG:

\begin{question}
\label{kodim}

Is it always $\dim Y_{\G} = k(X, \det E)$?

\end{question}

\begin{question}

Suppose $E$ to be strongly big but not strongly semiample, is $\dim Y_{\G} = \dim X$?

\end{question}

\begin{question}

Suppose $E$ is big (and AGG), what can we say about $\dim Y_{\G}$?

\end{question}

\begin{question}

Can we relate the complement of the augmented base loci to the Kodaira maps as it happens for line bundles
(cf. \cite{BCL})?
\end{question}

\begin{remark}
The last question has a negative answer if we look for a direct generalization of the theorem
in \cite{BCL}:  among other results the authors prove that the augmented base locus of a line bundle is the complementary of the biggest open subset 
where the Iitaka fibration is an isomorphism. However the example \ref{counterexdouble} shows that this cannot be generalized to vector bundles,
even if we were to change the word \emph{isomorphism} by \emph{finite regular map}.
In fact if we consider the tautological quotient $\mathcal{Q}$ of rank $2$ over $\pp^2 = \pp(V)$,
then its Kodaira map $\varphi_1$ is the isomorphism $\pp(V) \cong \mathbb{G}r(V^*, 2)$,
so all the Kodaira maps $\varphi_m$ are isomorphisms,
however $\mathbb{B}_+ (\mathcal{Q}) = \pp^2$.

\end{remark}

In any case  we can define some \emph{asymptotic invariants} for an AGG vector bundle:

\begin{definition}
Let $E$ be an AGG vector bundle, using the notations above we call
\begin{enumerate}

\item \emph{Iitaka index} of $E$ the integer ${FI}(E) = \deg (Y_{\infty} \to Y_{\G})$;

\item \emph{Iitaka dimension} of $E$ the integer $k(X, E) = \dim Y_{\G}$; 

\end{enumerate}

\end{definition}

%

Finally, we remark that in \cite{fujiwara} the author gives a characterization for varieties with 
semiample cotangent bundles and Kodaira dimension 0 or 1. 
We can prove the following theorem about varieties with strongly semiample cotangent bundle and Kodaira dimension 0: 

\begin{theorem}
\label{abeliancar}
Let $X$ be a smooth projective variety, and $\Omega^1_X$ its cotangent bundle. The following conditions are equivalent:
\begin{enumerate}
\item $X$ is an abelian variety;
\item \label{komega} the cotangent bundle $\Omega_X$ is strongly semiample and the Iitaka dimension of $\Omega_X$ vanishes: $\mathbb{B}(\Omega^1_X)= \emptyset$ and $k(X, \Omega_X)=0$;
\item \label{kkodaira} the cotangent bundle $\Omega_X$ is strongly semiample and the Kodaira dimension of $X$ vanishes: $\mathbb{B}(\Omega^1_X)= \emptyset$ and $k(X, K_X)=0$;
\item \label{ktrivial} $\Sym^m \Omega^1_X$ is trivial for some $m>0$.
 \end{enumerate}
\end{theorem}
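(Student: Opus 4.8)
The plan is to dispatch the equivalences among (ii), (iii) and (iv) cheaply, using Theorem~\ref{itakasemiample}, and to invest all the geometric effort in the implication (iv)$\Rightarrow$(i). The reverse (i)$\Rightarrow$(iv) is immediate: the cotangent bundle of an abelian variety is trivial, so $\Sym^1\Omega^1_X=\Omega^1_X$ is trivial and $m=1$ works.

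For the soft part, first I would record two elementary consequences of (iv). If $\Sym^m\Omega^1_X$ is trivial then, composing the natural surjection $\Sym^k(\Sym^m\Omega^1_X)\twoheadrightarrow \Sym^{km}\Omega^1_X$ with a trivialization, each $\Sym^{km}\Omega^1_X$ is a quotient of a trivial bundle, hence globally generated; thus $\mathbb{B}(\Omega^1_X)=\emptyset$ and $\Omega^1_X$ is strongly semiample. Moreover $\det(\Sym^m\Omega^1_X)=K_X^{\otimes N}$ is trivial for a suitable $N>0$, so $K_X$ is torsion and $k(X,K_X)=0$. Conversely, when $\Omega^1_X$ is strongly semiample, Theorem~\ref{itakasemiample} (and the remark following it) gives $k(X,\Omega^1_X)=\dim Y_{\mathbb{G}}=k(X,\det\Omega^1_X)=k(X,K_X)$, so conditions (ii) and (iii) are literally the same statement and both follow from (iv). Finally, if (ii) holds then for $m\gg 0$ the image $Y_m$ of the Kodaira map is a point, so $\varphi_m$ is constant; the kernel of $H^0(X,\Sym^m\Omega^1_X)\to \Sym^m\Omega^1_X(x)$ is then independent of $x$, and the evaluation map trivializes $\Sym^m\Omega^1_X$, giving (iv). This closes (ii)$\Leftrightarrow$(iii)$\Leftrightarrow$(iv).

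The heart of the matter is (iv)$\Rightarrow$(i). First I would observe that (iv) makes $\Omega^1_X$ numerically flat: $\Sym^m\Omega^1_X$ is trivial, hence nef, and via the map $\mathbb{P}(\Omega^1_X)\hookrightarrow \mathbb{P}(\Sym^m\Omega^1_X)$ pulling $\oo(1)$ back to $\oo_{\mathbb{P}(\Omega^1_X)}(m)$ this forces $\Omega^1_X$ nef; since in characteristic $0$ one has $\Sym^m(T_X)=(\Sym^m\Omega^1_X)^\vee$, the tangent bundle is nef as well, so $\Omega^1_X$ is numerically flat with vanishing Chern classes. Being polystable of degree $0$ with trivial Chern classes (legitimate since a symmetric power being polystable forces the bundle to be polystable in characteristic zero), $\Omega^1_X$ corresponds to a unitary representation $\rho\colon \pi_1(X)\to U(g)$, and triviality of $\Sym^m\rho$ forces every $\rho(\gamma)$ to be a scalar: a unitary matrix all of whose degree-$m$ symmetric eigenvalue products equal $1$ must have all eigenvalues equal to a common $m$-th root of unity. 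Hence $\rho=\chi\cdot\mathrm{id}$ for a character $\chi\colon\pi_1(X)\to\mu_m$, i.e. $\Omega^1_X\cong L^{\oplus g}$ for a torsion line bundle $L$ with $L^{\otimes m}\cong\oo_X$. (Alternatively one may invoke Nori's theory: $\Sym^m\Omega^1_X$ trivial makes $\Omega^1_X$ finite, hence trivialized by a finite étale cover, and run the same eigenvalue computation for the deck action.)

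The remaining, and decisive, point is to upgrade this to triviality of $\Omega^1_X$ itself. Let $p\colon X'\to X$ be the cyclic étale cover of degree $\mathrm{ord}(L)$ trivializing $L$; then $\Omega^1_{X'}=p^*\Omega^1_X\cong \oo_{X'}^{\oplus g}$, so $X'$ is a projective parallelizable manifold and therefore an abelian variety, and $X=X'/\langle\tau\rangle$ with $\tau$ a fixed-point-free automorphism generating the deck group. The equivariant structure on $p^*L\cong\oo_{X'}$ shows that $\tau$ acts on the cotangent space $H^0(X',\Omega^1_{X'})$ as the scalar $\zeta=\chi(\tau)$, so its linear part (writing $\tau=$ translation $\circ$ group automorphism, as for any automorphism of an abelian variety) is the homothety $\zeta^{-1}\,\mathrm{id}$. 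If $\zeta\neq1$ then $\mathrm{id}-(\text{linear part})$ is invertible on the tangent space, hence an isogeny, so $\tau$ has a fixed point, contradicting freeness. Therefore $\zeta=1$, $L\cong\oo_X$, $\Omega^1_X$ is trivial, and $X$ is abelian. I expect this fixed-point argument to be the crux: it is exactly what distinguishes genuine abelian varieties from the other varieties with numerically flat cotangent bundle (such as bielliptic surfaces), for which $\Sym^m\Omega^1_X$ is numerically flat but never trivial, precisely because their deck transformations have nontrivial homothety part.
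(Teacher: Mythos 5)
Your proposal is correct, and its overall skeleton --- settle (ii)$\Leftrightarrow$(iii)$\Leftrightarrow$(iv) via Theorem~\ref{itakasemiample} and the observation that a Kodaira map with zero-dimensional image trivializes the symmetric power, then concentrate all effort on (iv)$\Rightarrow$(i) --- matches the paper; in fact your closing eigenvalue-and-fixed-point argument (a deck transformation acting on $1$-forms by a homothety $\zeta\neq 1$ would have a fixed point on the abelian cover, contradicting freeness) is exactly the paper's final argument. The genuine difference is how the abelian \'etale cover is produced. The paper invokes \cite[Theorem I]{fujiwara} to obtain a finite \'etale Galois cover $f\colon A\to X$ with $A$ abelian, and then shows that the full (not necessarily cyclic) Galois group acts by scalars on $V=H^0(A,\Omega^1_A)$, hence by translations; you instead manufacture the cover: triviality of $\Sym^m\Omega^1_X$ gives numerical flatness, then polystability, then via Kobayashi--Hitchin a unitary representation $\rho$ with $\Sym^m\rho$ trivial, forcing $\rho$ to be scalar-valued and $\Omega^1_X\cong L^{\oplus g}$ with $L$ torsion; the cyclic cover killing $L$ is parallelizable and projective, hence abelian by Wang's theorem. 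Your route buys independence from the specialized reference \cite{fujiwara}, a cover that is cyclic by construction, and the sharper intermediate structure result $\Omega^1_X\cong L^{\oplus g}$; the paper's route buys brevity. The one thin point in your write-up is the parenthetical claim that polystability of a symmetric power forces polystability of the bundle in characteristic zero: this is true, but it is a theorem rather than an observation, and it needs either the Tannakian formalism for numerically flat bundles --- equivalently Simpson's tensor equivalence between semistable bundles with vanishing Chern classes and representations of $\pi_1(X)$, under which triviality of $\Sym^m$ directly forces the representation to land in the scalars $\mu_m\cdot\mathrm{id}$, which would let you bypass polystability and unitarity altogether --- or a Serre-type semisimplicity-descent argument; your alternative suggestion via Nori's finite bundles has the same issue, since $\Sym^m E$ trivial is not literally a relation $f(E)\cong g(E)$ of polynomials in $E$. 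With that single step properly justified, your proof is complete and is a legitimate alternative to the paper's.
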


\begin{proof}

$\mathbb{B}(\Omega^1_X)= \emptyset$ if and only if $\Omega^1_X$ is strongly semiample,
and in this case by Theorem
\ref{itakasemiample}
$k(X, \Omega^1_X)= k(X, K_X)$, then (\ref{komega}) and (\ref{kkodaira}) are equivalent.
Since $k(X, \Omega^1_X)=0$ then some symmetric power of the cotangent bundle is the pull-back from a point, 
hence it is trivial,
it follows that (\ref{komega}) and (\ref{kkodaira}) are equivalent to point (\ref{ktrivial}).

(\ref{komega}), (\ref{kkodaira}) and (\ref{ktrivial}) are  necessarily satisfied if $X$ is an abelian variety,
 let us show that they are sufficient as well.

Let us call $d := \dim X$ and suppose  $\Sym^m \Omega^1_X \cong \oo^{\oplus \sigma_m(d)}_X$ for some $m>0$.
Then by \cite[Theorem I]{fujiwara}
there is a finite \'etale Galois cover $f \colon A \to X$ where $A$ is an abelian variety.
Let us denote $G$ the finite group acting freely on $A$ such that $X = A/G$.
We will prove that within our hypothesis $G$ acts by translations on $A$
and therefore  $X$ is an abelian variety.

We have $f^* \Omega^1_X \cong \Omega^1_A \cong \oo^{\oplus d}_A$, 
 let $V: = H^0 (A , \Omega^1_A)$.
Then $G$ acts on $V$ via its action on $A$, let $\rho \colon G \to GL(V)$ be this action.
If the action $\rho$ is trivial, then $G$ acts on $A$ by translations.
In fact, since $\Sym^m \Omega^1_X $ is trivial, then 
$f^* H^0(X, \Sym^m \Omega^1_X) \cong H^0(A, \Sym^m \Omega^1_A) = \Sym^m V$,
therefore $\Sym^m (\rho)$ is the trivial action on $\Sym^m V$. 
This is easily seen to imply that $G$ acts by homotheties on $V$,
in fact the action of each element $g \in  G$ is diagonalizable as $G$ is a finite group,
and using the triviality of the  action on the symmetric product it is not difficult to show that all
the eigenvalues must coincide.
Therefore each $g \in  G$ acts on $V$ by $g \cdot v = \lambda_g v$, and on  $A$ by 
$g \cdot x = \lambda_g x +\tau$. Then if $\lambda_g \neq 1$ there is a fixed point in $A$, and 
this cannot be as the quotient is \'etale by hypothesis. So $G$ acts trivially on $V$ and acts by translations on $A$,
therefore $X$ is an abelian variety. Notice that in this case all holomorphic 1-forms on $V$ descend to $X$, 
and in fact \emph{a posteriori} $\Omega^1_X$ is globally generated.
\end{proof}

\begin{qst}

We can ask whether the above theorem extends to a birational criterion:
\begin{enumerate}
\item Is it true that $\mathbb{B}(\Omega^1_X) \neq X$ and Kodaira dimension $k(X, K_X)=0$ implies that $X$ is birational to an abelian variety?
\item Is it true that $\mathbb{B}(\Omega^1_X) \neq X$ and $k(X, \Omega^1_X)=0$ implies that $X$ is birational to an abelian variety?
\end{enumerate}
\end{qst}

As for the first question we can remark that it can be reduced to smooth minimal models of $X$ (when they exist):

\begin{lemma}
\label{extend1forms}
Let $X$ and $Y$ be two birational smooth projective varieties. 
Then $\mathbb{B}(\Omega^1_X) \neq X$ if and only if $\mathbb{B}(\Omega^1_Y) \neq Y$,
\emph{i.e.} $X$ has AGG cotangent bundle if and only if $Y$ has AGG cotangent bundle.
\end{lemma}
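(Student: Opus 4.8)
The plan is to show that the condition $\mathbb{B}(\Omega^1_X)\neq X$ is a birational invariant of smooth projective varieties, using the fact (recorded in the remark following the definition of AGG) that $\mathbb{B}(\Omega^1_X)\neq X$ is equivalent to $S^m\Omega^1_X$ being generated by global sections over a dense open subset for some $m>0$. By symmetry it suffices to prove one implication, say that AGG for $X$ forces AGG for $Y$. The two ingredients I would use are the classical birational invariance of the spaces of symmetric differentials and a transfer of generic generation through a common resolution. So the first step is to choose, by Hironaka, a smooth projective variety $W$ together with birational morphisms $p\colon W\to X$ and $q\colon W\to Y$ resolving the birational map $X\dashrightarrow Y$.

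The key technical lemma is that for a birational morphism $p\colon W\to X$ of smooth projective varieties, the pullback of forms $p^*\colon H^0(X,S^m\Omega^1_X)\to H^0(W,S^m\Omega^1_W)$ is an isomorphism for every $m$. Injectivity is immediate since $p$ is a generic isomorphism. For surjectivity, I would first observe that, $X$ being smooth hence normal, the inverse rational map $p^{-1}$ is defined on an open $X_0\subseteq X$ with $\mathrm{codim}(X\setminus X_0)\geq 2$; a section of $p$ over $X_0$ is a closed immersion with dense image, so $p$ is in fact an isomorphism over $X_0$. Then any $t\in H^0(W,S^m\Omega^1_W)$ restricts to $X_0$ through this isomorphism, and since $S^m\Omega^1_X$ is locally free on the smooth variety $X$ it extends across the codimension $\geq 2$ locus $X\setminus X_0$ by Hartogs, producing a preimage of $t$. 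Applying this to both $p$ and $q$ gives compatible isomorphisms $H^0(X,S^m\Omega^1_X)\cong H^0(W,S^m\Omega^1_W)\cong H^0(Y,S^m\Omega^1_Y)$.

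To transfer generic generation, I would take a general point $w\in W$, chosen to avoid the exceptional loci of both $p$ and $q$ and to lie over the open locus where $S^m\Omega^1_X$ is globally generated. There both $p$ and $q$ are local isomorphisms, so they induce canonical isomorphisms $S^m\Omega^1_X(p(w))\cong S^m\Omega^1_W(w)\cong S^m\Omega^1_Y(q(w))$ which intertwine the evaluation of a global section with the evaluation of its pullback. Since $p^*$ and $q^*$ are bijective on global sections, the evaluation map is surjective at $p(w)$ if and only if it is surjective at $w$ if and only if it is surjective at $q(w)$. Because $q$ is dominant, the points $q(w)$ sweep out a dense subset of $Y$, so $S^m\Omega^1_Y$ is generated by global sections over a dense open subset and $\mathbb{B}(\Omega^1_Y)\neq Y$.

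I expect the main obstacle to be precisely the surjectivity of $p^*$, that is, the extension of symmetric differentials across the exceptional locus: this step crucially uses smoothness of $X$, both to guarantee that the non-isomorphism locus has codimension at least two in $X$ and to guarantee that the locally free sheaf $S^m\Omega^1_X$ satisfies Hartogs extension there. Working through the resolution $W$, rather than directly through the birational map, is what makes the two varieties play symmetric roles and keeps the evaluation-map bookkeeping clean, since the direct birational correspondence need not be an isomorphism in codimension one on both sides.
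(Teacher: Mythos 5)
Your proposal is correct and follows essentially the same route as the paper: both arguments hinge on pulling back symmetric differentials along a birational morphism and on Hartogs extension of sections of the locally free sheaf $S^m\Omega^1$ across the codimension $\geqslant 2$ non-isomorphism locus, which is exactly where smoothness enters. The only difference is one of completeness rather than of method: the paper's proof treats only the case where the birational map is a regular morphism $f\colon X\to Y$ (leaving the reduction to this case implicit), whereas you explicitly pass through a common Hironaka resolution $W$, which cleanly handles the general birational situation.
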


\begin{proof}

Suppose $f \colon X \to Y$ is a regular and birational morphism between smooth varieties $X$ and $Y$, 
with center $Z \subset Y$ and exceptional divisor $E \subset X$. We have an exact sequence on $X$:
\[
0 \to f^* \Omega^1_Y \to \Omega^1_X \to \mathcal{F} \to 0
\]
where $\mathcal{F}$ is a sheaf supported on the exceptional divisor $E$ of $f$.

Now if $\mathbb{B}(\Omega^1_Y) \neq Y$ then some symmetric product $S^m \Omega^1_Y$ is generated by global sections over an open subset
of $Y$, so by taking the pull-back of the symmetric product of 1-forms we have that $S^m \Omega^1_X$ is generated by global sections over an open subset as well, so $\mathbb{B}(\Omega^1_X) \neq X$.

Vice-versa, suppose that $\mathbb{B}(\Omega^1_X) \neq X$: as $\codim_Y Z \geqslant 2$, we can restrict 1-forms on $X$ to 1-forms on 
$X \setminus E \cong Y \setminus Z$, and then extend them to 1-forms on $Y$,
so if $S^m \Omega^1_X$ is generated by global sections over an open subset the same happens for $S^m \Omega^1_Y$,
the two vector bundles  being isomorphic over $X \setminus E \cong Y \setminus Z$.

\end{proof}

The answer to the first question is positive in case $\dim X \leqslant 2$:

\begin{theorem}

Let $X$ be a smooth projective variety such that $\dim X \leqslant 2$ and $k(X, K_X) = 0$.
Then $X$ is birational to an abelian variety if and only if $\mathbb{B}(\Omega^1_X) \neq X$.

\end{theorem}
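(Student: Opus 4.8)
The plan is to prove both implications after reducing to a minimal model. First I would record two birational invariances: the condition $\mathbb{B}(\Omega^1_X)\neq X$ (i.e.\ AGG cotangent bundle) is a birational invariant among smooth projective varieties by Lemma \ref{extend1forms}, and the Kodaira dimension $k(X,K_X)$ is a birational invariant as well. Since the two hypotheses and the conclusion ``birational to an abelian variety'' are all birational in nature, I may replace $X$ by its smooth minimal model; in Kodaira dimension $0$ the minimal model exists and is unique, and an abelian variety is already minimal, so ``birational to an abelian variety'' is equivalent to ``the minimal model is abelian''. The implication ``$X$ birational to an abelian variety $\Rightarrow \mathbb{B}(\Omega^1_X)\neq X$'' is then immediate and needs no classification: an abelian variety $A$ has $\Omega^1_A$ trivial, hence globally generated, so $\mathbb{B}(\Omega^1_A)=\emptyset\neq A$, and birational invariance transports this to $X$.

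For the converse I would use the Enriques--Kodaira classification. If $\dim X\le 1$ the hypothesis $k(X,K_X)=0$ forces $X$ to be an elliptic curve, which is abelian, so there is nothing to prove. If $\dim X=2$, the minimal model of $X$ is, by the classification of surfaces of Kodaira dimension $0$, an abelian surface, a bielliptic surface, a K3 surface, or an Enriques surface. It therefore suffices to show that the last three classes do \emph{not} have AGG cotangent bundle, i.e.\ satisfy $\mathbb{B}(\Omega^1)=X$; together with the abelian case this yields that, among minimal surfaces with $k(X,K_X)=0$, AGG cotangent bundle holds exactly for abelian surfaces.

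The bielliptic and Enriques cases I would dispatch by passing to an \'etale cover. A bielliptic surface is a free quotient $X=(A\times B)/G$ of a product of elliptic curves by a finite group $G$ acting by translations on $A$ and by automorphisms of some order $n\in\{2,3,4,6\}$ on $B$. Writing $V=H^0(A\times B,\Omega^1_{A\times B})$, the bundle $\Omega^1_{A\times B}\cong \oo\otimes V$ is trivial, and since the cover is \'etale Galois we get $H^0(X,S^m\Omega^1_X)=(S^mV)^G$. As $G$ fixes the $1$-form pulled back from $A$ and acts on the one from $B$ through a character of order $n\ge 2$, the invariants are spanned by the monomials in which the second form occurs with exponent divisible by $n$, whence $\dim(S^mV)^G=\lfloor m/n\rfloor+1<m+1=\rk S^m\Omega^1_X$ for every $m\ge 1$. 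Thus the evaluation map can never be surjective at any point, $\mathrm{Bs}(S^m\Omega^1_X)=X$ for all $m$, and $\mathbb{B}(\Omega^1_X)=X$. For an Enriques surface $X$ I would use its K3 double cover $\widetilde X\to X$: pullback injects $H^0(X,S^m\Omega^1_X)\hookrightarrow H^0(\widetilde X,S^m\Omega^1_{\widetilde X})$, so the Enriques case reduces to the K3 case.

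The main obstacle is the K3 case, where I must show $H^0(X,S^m\Omega^1_X)=0$ for all $m\ge 1$. Here I would use that for a K3 surface $\det\Omega^1_X=K_X=\oo_X$ and $c_2(\Omega^1_X)=c_2(X)=24>0$, and that $\Omega^1_X$ is slope-stable of slope $0$ with respect to any polarization; concretely, by Yau's theorem the Ricci-flat K\"ahler metric makes $\Omega^1_X$ an irreducible Hermite--Einstein bundle with holonomy $SU(2)$, the standard representation $\mathbb{C}^2$ corresponding to $\Omega^1_X$. In characteristic $0$ the symmetric power $S^m\Omega^1_X$ is then again polystable of slope $0$, and for such bundles every holomorphic section is parallel for the Hermite--Einstein connection, hence corresponds to a holonomy-invariant vector, i.e.\ a trivial subrepresentation of $S^m(\mathbb{C}^2)$. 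But for $m\ge 1$ this $SU(2)$-representation is irreducible and nontrivial, so it contains no invariant vector; therefore $H^0(X,S^m\Omega^1_X)=0$ and $\mathbb{B}(\Omega^1_X)=X$. (Equivalently one may simply cite that K3 surfaces carry no symmetric differentials.) This excludes the three non-abelian classes, and combined with the reductions of the first two paragraphs it proves the theorem.
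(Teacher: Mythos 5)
Your proof is correct and follows the same skeleton as the paper's: reduce to a smooth minimal model via Lemma \ref{extend1forms}, dispatch the forward implication using triviality of the cotangent bundle of an abelian variety, and then exclude the remaining three classes (bielliptic, K3, Enriques) in the Enriques--Kodaira classification of minimal surfaces of Kodaira dimension zero. The differences lie in how you discharge the individual cases. For bielliptic surfaces the paper invokes the group-action argument of Theorem \ref{abeliancar} (a nontrivial free quotient of an abelian surface cannot have generically generated symmetric powers of $\Omega^1$, since the group would then act by translations); you instead count invariants directly, showing $\dim H^0(X, S^m\Omega^1_X)=\lfloor m/n\rfloor +1 < m+1 = \rk\, S^m \Omega^1_X$, so evaluation can never be surjective at any point --- a clean, self-contained computation that proves the same intermediate claim. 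For K3 surfaces the paper simply cites Theorem 7.8 of \cite{bdpp} for the vanishing of symmetric differentials, whereas you reprove that vanishing via Yau's theorem, the fact that holomorphic sections of a slope-zero Hermite--Einstein bundle are parallel, and the irreducibility of $S^m(\mathbb{C}^2)$ as an $SU(2)$-representation; this makes the argument self-contained at the cost of importing differential-geometric machinery, and you correctly note the citation shortcut as an alternative. Both treatments of the Enriques case reduce to the K3 double cover, yours slightly more explicitly via the injection of sections under \'etale pullback. Finally, for the forward implication the paper uses birationality of the Albanese map, while you transport the conclusion through Lemma \ref{extend1forms}; both are valid, and yours reuses machinery already established.
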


\begin{proof}
It is obvious in dimension 1.
Let us observe first that if $X$ is birational to an abelian variety then the Albanese morphism is a birational map,
and the pull-back of holomorphic 1-forms from the Albanese variety to $X$ gives $\mathbb{B}(\Omega^1_X) \neq X$.

Let us prove then that Kodaira dimension $k(X, K_X) = 0$ and $\mathbb{B}(\Omega^1_X) \neq X$ imply birationality to an abelian surface.

By Lemma \ref{extend1forms} we can  suppose  that $X$ is a minimal surface. Then it is an abelian surface, or a bielliptic surface, or a $K3$ or an Enriques surface.
Now a bielliptic surface is a smooth quotient of an abelian surface, so its cotangent bundle cannot be generically generated by global sections for the same argument as in Theorem \ref{abeliancar}, and the same applies for its symmetric powers.
If $X$ is a $K3$ surface, then the symmetric powers of the cotangent bundle have no global sections, as it is proven in Theorem 7.8 in \cite{bdpp}, so the same happens on an Enriques surfaces as they are quotients of $K3$'s.
\end{proof}

Finally, we notice that in the work of the first author \cite{stabtrans} some rational maps are constructed from Grassmannians to  moduli spaces of vector bundles over a curve,
and it would be interesting to see what kind of constructions could lead considering Kodaira maps from these moduli spaces.



\bibliographystyle{amsalpha}
\bibliography{iiitaka}

\end{document}